\documentclass{amsart}
\usepackage{amsmath, amssymb}
\usepackage{color,mathdots}
\usepackage{url}

\newtheorem{theorem}{Theorem}[section]
\newtheorem{lemma}[theorem]{Lemma}
\newtheorem{corollary}[theorem]{Corollary}
\newtheorem{fact}[theorem]{Fact}
\newtheorem{proposition}[theorem]{Proposition}

\theoremstyle{definition}
\newtheorem{example}[theorem]{Example}
\newtheorem{question}[theorem]{Question}
\newtheorem{remark}[theorem]{Remark}

\newtheorem{assumption}[theorem]{Assumption}


\def \U {\mathcal U}

\def \C {\mathcal C}

\def\DCF{\operatorname{DCF}}
\def\ccm{\operatorname{CCM}}

\def\dcl{\operatorname{dcl}}
\def\acl{\operatorname{acl}}

\def\\tp{\operatorname{\tp}}

\def\tp{\operatorname{tp}}
\def\stp{\operatorname{stp}}
\def\cb{\operatorname{Cb}}

\def\RM{\operatorname{RM}}


\def\Ind#1#2{#1\setbox0=\hbox{$#1x$}\kern\wd0\hbox to 0pt{\hss$#1\mid$\hss}
\lower.9\ht0\hbox to 0pt{\hss$#1\smile$\hss}\kern\wd0}
\def\ind{\mathop{\mathpalette\Ind{}}}
\def\Notind#1#2{#1\setbox0=\hbox{$#1x$}\kern\wd0\hbox to 0pt{\mathchardef
\nn=12854\hss$#1\nn$\kern1.4\wd0\hss}\hbox to
0pt{\hss$#1\mid$\hss}\lower.9\ht0 \hbox to
0pt{\hss$#1\smile$\hss}\kern\wd0}
\def\nind{\mathop{\mathpalette\Notind{}}}

\begin{document}

\title[Isolated types of finite rank]{Isolated types of finite rank:\\ an abstract Dixmier-Moeglin equivalence}

\author{Omar Le\'on S\'anchez}
\address{School of Mathematics, University of Manchester, Oxford Road, Manchester, United Kingdom M13 9PL}
\email{omar.sanchez@manchester.ac.uk}

\author{Rahim Moosa}
\address{Department of Pure Mathematics, University of Waterloo, 200 University Avenue West, Waterloo, Ontario, Canada N2L 3G1}
\email{rmoosa@uwaterloo.ca}

\date{\today}
\thanks{{\em Acknowledgements}: R. Moosa was partially supported by an NSERC Discovery Grant.}
\subjclass[2010]{03C95, 03C98, 12H05}
\keywords{model theory, totally transcendental theories, differential fields}

\begin{abstract}
Suppose $T$ is totally transcendental and every minimal non-locally-modular type is nonorthogonal to a nonisolated minimal type over the empty set.
It is shown that a finite rank type $p=\tp(a/A)$ is isolated if and only if $\displaystyle a\ind_{Ab}q(\U)$ for every $b\in \acl(Aa)$ and $q\in S(Ab)$ nonisolated and minimal.
This applies to the theory of differentially closed fields -- where it is motivated by the differential Dixmier-Moeglin equivalence problem -- and the theory of compact complex manifolds.
\end{abstract}

\maketitle


\section{Introduction}

\noindent
Let $T$ be a complete totally transcendental theory admitting elimination of imaginaries, and $\U\models T$ a sufficiently saturated model.
We are interested in the following condition on a type $p\in S(A)$.
\begin{itemize}
\item[($\dagger$)]
Suppose $a\models p$, $b\in \acl(Aa)$, and $q\in S(Ab)$ is nonisolated and minimal. 
Then $\displaystyle a\ind_{Ab}q(\U)$.
\end{itemize}
By a {\em minimal} type we mean one that is stationary and of $U$-rank one.
The condition~($\dagger$) is essentially about the relationship (or rather lack thereof) between~$p$ and the nonisolated minimal complete types of the theory -- though the specific choice of parameters involved here are important.
We will show that in certain theories of interest (including differentially closed fields and compact complex manifolds), and assuming that $p$ is of finite rank, this condition characterises when $p$ is isolated.
It should be viewed as a reduction of the study of isolation from the finite rank case to the minimal case.

Another motivation for~($\dagger$) comes from an application of the model theory of  differentially closed fields of characteristic zero ($\DCF_0$) to a problem in noncommutative algebra.
It was observed in~\cite{pdme} that the classical Dixmier-Moeglin equivalence for noetherian algebras is connected to the relationship in $\DCF_0$ between a finite rank type over constant parameters being isolated and being weakly orthogonal to the field of constants.
The fact that these are not equivalent lead in~\cite{pdme} to the first counterexample to the Poisson Dixmier-Moeglin equivalence, and the first finite Gelfand-Kirillov dimension counterexample to the classical Dixmier-Moeglin equivalence.
Now, $p\in S(A)$ being weakly orthogonal to the constants is precisely the instance of~($\dagger$) when $b=\emptyset$ and $q$ is the generic type of the constants.
The counterexample in~\cite{pdme} was nonisolated and satisfied this instance of~($\dagger$) but failed another instance; one where $q$ was the generic type of a Manin kernel of a simple abelian variety not descending to the constants.
So, the equivalence of~($\dagger$) and isolation for finite rank types can be viewed as an abstract resolution to the Dixmier-Moeglin equivalence problem, where in order to get a true statement we have to replace weak orthogonality to the constants by all instances of~($\dagger$).

Our main result here is the following:
{\em Suppose that in $T$ every minimal non-locally-modular type is nonorthogonal to a nonisolated minimal type over the empty set.
Then a finite rank $p\in S(A)$ is isolated if and only if it satisfies}~($\dagger$).
This is Theorem~\ref{the1} below.
Specialising to the case of $T=\DCF_0$, we obtain  in Theorem~\ref{the1dcf} a more concrete form which we point out yields a quick proof of the differential Dixmier-Moeglin equivalence for certain $D$-varieties that were considered in~\cite{BLSM}.
Also in~$\S$\ref{sectdcf} we give examples showing that our characterisation of isolation cannot be substantially improved, in that we really have to range over all $b\in\acl(Aa)$ in the formulation of~($\dagger$).

We will use without extensive explanation various notions and facts from geometric stability theory -- we suggest~\cite{gst} as a general reference.
The underlying total transcendentality assumption is so that prime models over sets exist and are unique up to isomorphism, and $p\in S(A)$ is isolated if and only if it is realised in a prime model over $A$.
In particular, we freely use the following properties:
\begin{enumerate}
\item
$\tp(a/Ab)$ and $\tp(b/A)$ are isolated if and only if $\tp(ab/A)$ is isolated.
\item $p=\tp(a/A)$ is isolated if and only if $\stp(a/A)$ is isolated.
\item
If $p$ is nonisolated then so is any nonforking extension.
\end{enumerate}
Points~(1) and~(2) follow easily using prime models.
For point~(3), note that if $q=\tp(a/B)$ is isolated by $\phi(x,b)$, and $r(y):=\tp(b/A)$ where $A=\acl(A)\subseteq B$ is such that $q$ does not fork over $A$, then the $\phi(x,y)$-definition of $r(y)$ isolates $\tp(a/A)$.
We also use the definable binding group theorem in totally transcendental theories.

\bigskip
\section{Necessity of~($\dagger$)}
\label{sectnec}

\noindent
Without any assumptions beyond total transcendentality, we can show that~($\dagger$) is necessary:

\begin{proposition}\label{isotodagger}
If $p\in S(A)$ is isolated then~{\em ($\dagger$)} holds.
\end{proposition}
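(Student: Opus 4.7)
The plan is to argue by contradiction, using the three numbered properties together with the standard fact (in totally transcendental theories) that the nonforking extension of an isolated type is itself isolated. Fix $a\models p$, $b\in\acl(Aa)$, a nonisolated minimal $q\in S(Ab)$, and suppose $a\nind_{Ab}q(\U)$. First, because $\tp(b/Aa)$ is algebraic and hence isolated, and $\tp(a/A)=p$ is isolated, two applications of~(1) yield that $\tp(a/Ab)$ is isolated.

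By finite character of forking, choose a tuple $\overline c=(c_1,\dots,c_n)$ from $q(\U)$ of minimal length with $a\nind_{Ab}\overline c$. Minimality forces $a\ind_{Ab}c_1\cdots c_{n-1}$ and hence (by transitivity of forking) $a\nind_{Abc_1\cdots c_{n-1}}c_n$. Since $q$ has $U$-rank one and is stationary, $\tp(c_n/Abc_1\cdots c_{n-1})$ must be the unique nonforking extension of $q$ — otherwise $c_n\in\acl(Abc_1\cdots c_{n-1})$, contradicting the dependence on $a$ — and moreover $c_n\in\acl(Abc_1\cdots c_{n-1}a)$. By property~(3), this nonforking extension is nonisolated.

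Next, since $\tp(a/Ab)$ is isolated and $a\ind_{Ab}c_1\cdots c_{n-1}$, the nonforking extension $\tp(a/Abc_1\cdots c_{n-1})$ is also isolated. This uses the standard prime-model argument: one can find a prime model over $Ab$ that contains $a$ and is independent from $c_1\cdots c_{n-1}$ over $Ab$, which then embeds into the prime model over $Abc_1\cdots c_{n-1}$, forcing $a$ into the latter (and hence its $Abc_1\cdots c_{n-1}$-type to be isolated). Combining this with the algebraic (hence isolated) type $\tp(c_n/Abc_1\cdots c_{n-1}a)$, property~(1) gives that $\tp(c_na/Abc_1\cdots c_{n-1})$ is isolated, and a second application of~(1) then yields $\tp(c_n/Abc_1\cdots c_{n-1})$ isolated — contradicting the nonisolation established above.

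The main obstacle is the invocation of preservation of isolation under nonforking extensions; this is standard in totally transcendental theories (via the prime-model construction just sketched) but is not one of the three properties listed explicitly, so it is the one nontrivial ingredient. The remainder is a mechanical chase through~(1) and~(3), together with the $U$-rank one structure of $q$ used to control the nonforking behaviour of $c_n$.
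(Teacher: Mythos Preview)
There is a genuine gap at the step you flag yourself. You claim that if $\tp(a/Ab)$ is isolated and $a\ind_{Ab}c_1\cdots c_{n-1}$, then the nonforking extension $\tp(a/Abc_1\cdots c_{n-1})$ is isolated. This is \emph{not} a general fact in totally transcendental theories, and it is not property~(3), which only gives the other direction. Counterexample: in the theory of $(\mathbb Z,s)$ (an infinite set with a successor function), all elements have the same isolated type over~$\emptyset$, but if $a,b$ lie in distinct $\mathbb Z$-chains then $a\ind_\emptyset b$ while $\tp(a/b)$ is the nonisolated generic type.

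Your sketched prime-model argument does not repair this. You can indeed arrange a prime model $M$ over $Ab$ with $a\in M$ and $M\ind_{Ab}c_1\cdots c_{n-1}$. But to embed $M$ into a prime model $N$ over $Abc_1\cdots c_{n-1}$ \emph{over} $Abc_1\cdots c_{n-1}$, you need every finite tuple from $M$ to have isolated type over $Abc_1\cdots c_{n-1}$ --- which is precisely the claim in question. An embedding merely over $Ab$ gives an image $a'\in N$ with $\tp(a'/Ab)=\tp(a/Ab)$, but no control over $\tp(a'/Abc_1\cdots c_{n-1})$; in the $(\mathbb Z,s)$ example, $a'$ would land in the chain of $b$ and have a forking type over~$b$.

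What is actually required is the paper's Lemma: if $q\in S(B)$ is nonisolated minimal and $M$ is prime over $B$, then $M\ind_B q(\U)$. The proof uses a Morley-rank comparison that genuinely exploits the nonisolation of $q$: an element of $M$ satisfying the least-rank formula in $q$ cannot realise $q$ itself (by nonisolation), hence has strictly smaller Morley rank, hence cannot have a realisation of a nonforking extension of $q$ algebraic over it. Once this lemma is in hand the contradiction is immediate --- take $M$ prime over $Ab$ containing $a$ and conclude $a\ind_{Ab}q(\U)$ directly --- and your steps~3--8 are superfluous. In short, your argument has correctly isolated the one nontrivial ingredient, but the justification you give for it is the false converse of~(3) rather than the rank argument that is actually needed.
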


\begin{proof}
Note that $\displaystyle a\ind_{C}q(\U)$ where $C=\dcl(Aba)\cap\dcl(Abq(\U))$.
Indeed, this follows from stable embeddedness, see for example Lemma~1 in the Appendix of~\cite{zoe-udi}.
In fact, $\tp(a/C)\vdash\tp(a/Abq(\U))$.
In any case, it suffices to show that $C\subseteq\acl(Ab)$.

Let $M$ be a prime model over $Ab$.
Since $\tp(a/A)$ and $\tp(b/Aa)$ are isolated, so is $\tp(a/Ab)$.
By automorphisms, we may assume that $a$ is in~$M$.
It follows that $C\subseteq\dcl(Aba)\subseteq M$.

Now, let $c\in C$.
Since $c\in\dcl(Abq(\U))$ we can write $c=f(e)$ where $f$ is $Ab$-definable and $e$ is a finite tuple from $q(\U)$.
On the other hand, as $M$ is a model, we can find $e'$ from $M$ such that $c=f(e')$ as well.
We claim that $\displaystyle e'\ind_{Ab} e$.
This will suffice, as then $\dcl(Abe)\cap \dcl(Abe')\subseteq\acl(Ab)$, and hence $c=f(e)=f(e')$ is in $\acl(Ab)$, as desired.

As was pointed out to us by the anonymous referee, that $\displaystyle e'\ind_{Ab} e$ in turn follows from the following general, and probably well known, lemma.
\end{proof}

\begin{lemma}
Suppose $q\in S(B)$ is a minimal nonisolated type and $M$ is a prime model over $B$.
Then $\displaystyle M\ind_Bq(\U)$.
\end{lemma}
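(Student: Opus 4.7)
The plan is to argue by contradiction and reduce the multi-element configuration to a single-element one over an enlarged base. Suppose $M \nind_B q(\U)$, so some finite tuple $m$ from $M$ satisfies $m \nind_B$ some tuple from $q(\U)$. Replacing that tuple by a maximal $B$-independent sub-tuple $(e_1,\dots,e_n)$ preserves its algebraic closure over $B$ and hence the forking, so I may assume the $e_i$ are $B$-independent realisations of $q$. Minimality of $q$ gives $U(e_1\cdots e_n/B) = n$, while $m\nind_B(e_1,\dots,e_n)$ forces $U(e_1\cdots e_n/Bm) < n$. By additivity of $U$-rank there is a least $j$ with $e_j \in \acl(Bm\,e_1\cdots e_{j-1})$, and by minimality of $j$ one also has $m \ind_B (e_1,\dots,e_{j-1})$. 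Set $B^* := Be_1\cdots e_{j-1}$ and $q^* := \tp(e_j/B^*)$, the unique nonforking extension of $q$: still minimal and stationary, and nonisolated by property~(3) of the excerpt.

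The crux is to show that $\tp(m/B^*)$ is isolated. I would first replace $B$ by $\acl(B)$; this leaves $M$ prime over $B$ and all the independence data unchanged, and, via elimination of imaginaries, it makes $\tp(m/B)$ stationary. Let $\phi(\bar x) \in L(B)$ isolate $\tp(m/B)$; then $\phi$ has Morley rank $r := \RM(m/B)$ and Morley degree one over $B$. Because $\tp(m/B)$ is stationary and $m \ind_B B^*$, the type $\tp(m/B^*)$ is its unique nonforking extension and $\phi$ has Morley degree one over $B^*$ as well. Definability of Morley rank in totally transcendental theories then yields a $B^*$-definable subset of $\phi$ consisting of exactly those realisations of rank $r$ over $B^*$; since $\phi$ has degree one over $B^*$, this subset contains a single complete type, namely $\tp(m/B^*)$, which is therefore isolated.

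Finally, $\tp(e_j/B^*m)$ is algebraic, hence isolated, by the choice of $j$; combined with $\tp(m/B^*)$ isolated, property~(1) of the excerpt gives that $\tp(m,e_j/B^*)$ is isolated, and projecting onto the second coordinate shows $q^* = \tp(e_j/B^*)$ is isolated, contradicting the first paragraph. The main obstacle is the middle step---the preservation of isolation under nonforking extensions over an algebraically closed base. This is where total transcendentality plays its essential role, via definability of Morley rank together with the stationarity one obtains by passing to $\acl(B)$.
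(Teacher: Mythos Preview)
Your reduction to the crux is fine, but the justification of the crux --- that $\tp(m/B^*)$ is isolated --- has a genuine gap. What is meant by ``definability of Morley rank'' in totally transcendental theories is that for each $\psi(x,\bar y)$ and each $k$ the set $\{\bar b:\RM_x\psi(x,\bar b)\geq k\}$ is definable; this is definability in the \emph{parameter} variable. It does not give that $\{a\models\phi:\RM(a/B^*)=r\}$ is $B^*$-definable. In fact, since $\phi$ has Morley degree one, that set is exactly the set of realisations of the single type $\tp(m/B^*)$, so asserting it is definable is exactly asserting that $\tp(m/B^*)$ is isolated --- the argument is circular as written. Nor can you fall back on a general principle that nonforking extensions of isolated stationary types are isolated: in the theory of $(\mathbb Z,S)$ the unique $1$-type over $\emptyset$ is isolated but its nonforking extension over any realisation is not.

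Your crux is nonetheless true, but the argument must use that $\bar e$ comes from $q(\U)$ with $q$ nonisolated and that $m\in M$. For a single $e_1$: if some $a\models\phi$ had $a\nind_B e_1$, minimality of $q$ would give $e_1\in\acl(Ba)$; since $\tp(a/B)=\tp(m/B)$, an automorphism over $B$ taking $a$ to $m$ yields a realisation of $q$ in $\acl(Bm)\subseteq M$, contradicting nonisolation of~$q$. So in fact $\phi$ remains complete over $Be_1$. One then inducts on the length of $\bar e$, replacing $B$ by $\acl(Be_1\cdots e_i)$ and $q$ by its (still minimal, still nonisolated) nonforking extension, and using that the previous step puts $m$ into a prime model over the new base. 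The paper's proof is organised differently --- via the canonical base $\cb(\alpha/M)$ and a Morley-rank comparison inside a formula of least rank and degree in $q$ --- and avoids this induction entirely; but the underlying contradiction, a realisation of $q$ appearing inside a prime model where it cannot, is of the same nature.
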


\begin{proof}
It suffices to show that if $\alpha$ is a finite sequence of realisations of~$q$, then the canonical base, $c:=\cb(\alpha/M)$, is contained in $\acl(B)$.
Suppose, toward a contradiction, that $c\notin\acl(B)$.

Note that $c\in M\cap \dcl\big(q(\U)\big)$.
Let $D$ be a finite set of realisations of $q$ such that $c\in\dcl(D)$.
Then $c\nind_BD$.

Let $\phi\in q$ be of minimal Morley rank and degree.
(Note that the Morley rank of $q$ need not be one.)
Then $c\in M\cap\dcl\big(\phi(\U)\big)=\dcl\big(\phi(M)\big)$.
Let $A\subseteq \phi(M)$ be a finite subset such that $c\in\dcl(A)$.
Then $A\nind_BD$.
Hence $a\nind_{B'}d$ for some $a\in A,d\in D$ and $B'\supseteq B$.
By minimality of $q$, it follows that $d\in\acl(B'a)\setminus\acl(B')$ and $\tp(d/B')$ is a nonforking extension of $q$.

By choice, $\phi$ isolates $q$ among those types over $B$ with Morley rank $\geq \RM(q)$.
And $\phi\in\tp(a/B)$.
But $\tp(a/B)\neq q$ since the former is isolated, being realised in $M$, and the latter is not isolated by assumption.
We must therefore have that $\RM(a/B)<\RM(q)$.
Hence $\RM(a/B')\leq\RM(a/B)<\RM(q)=\RM(d/B')$.
This contradicts $d\in\acl(B'a)$.
\end{proof}

\begin{remark}
The proof of the above proposition actually gives us something stronger: in~($\dagger$) we can range over all $b$ such that $\tp(b/Aa)$ is isolated, rather than asking for $b\in\acl(Aa)$.
\end{remark}

But what we are really interested in is not strong consequences of isolation, but rather, weak sufficient conditions.

\begin{question}
Does~($\dagger$) characterise isolation of a {\em finite rank} type $p\in S(A)$?
\end{question}

\bigskip
\section{Sufficiency of~($\dagger)$}
\label{sectchar}

\noindent
We make an additional assumption on $T$ under which~$(\dagger)$ becomes also a sufficient condition for isolation of finite rank types.

\begin{assumption}
\label{assumption}
Every complete non-locally-modular minimal type is nonorthogonal to a nonisolated minimal type in $S(\emptyset)$.
\end{assumption}

This is satisfied in $\DCF_0$ and $\ccm$ because of the particular manifestations of the Zilber dichotomy in these theories:
In $\ccm$ every non-locally-modular minimal type is nonorthogonal to the generic type of the projective line, and in $\DCF_0$ every non-locally-modular minimal type is nonorthogonal to the generic type of the field of constants.
These are minimal types over the empty set because the projective line in $\ccm$ and the field of constants in $\DCF_0$ are both $0$-definable strongly minimal sets.
They are nonisolated because the projective line and the field of constants both have infinitely many points in $\acl(\emptyset)$.

\begin{theorem}\label{the1}
Suppose $T$ is totally transcendental and satisfies Assumption~\ref{assumption}.
Let $p\in S(A)$ be of finite rank.
Then $p$ is isolated if and only if it satisfies~{\em($\dagger$)}.
\end{theorem}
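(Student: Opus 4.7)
The forward direction is Proposition~\ref{isotodagger}, so I will focus on sufficiency. Given $p=\tp(a/A)$ of finite $U$-rank $n$ satisfying~$(\dagger)$, my plan is to run a semi-minimal analysis of $p$ and then contradict $(\dagger)$ at a nonisolated link of that analysis.

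The first step is to produce, using total transcendentality and finite $U$-rank of $p$, a sequence $a_1,\dots,a_n\in\acl(Aa)$ with $a\in\acl(Aa_1\cdots a_n)$ and each $\tp(a_i/Aa_1\cdots a_{i-1})$ of $U$-rank $1$; along the way I will adjoin finitely many algebraic parameters from $\acl(A)\subseteq\acl(Aa)$ to ensure stationarity, so that each link is minimal in the paper's sense. Iterating point~(1) of the introduction, together with the algebraicity of $\tp(a/Aa_1\cdots a_n)$, yields the key structural fact: \emph{$p$ is isolated if and only if every link $\tp(a_i/Aa_1\cdots a_{i-1})$ is isolated.}

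Suppose toward a contradiction that $p$ is nonisolated. Then some link is nonisolated; take the least such index $i$ and set $b:=(a_1,\dots,a_{i-1})\in\acl(Aa)$ and $q:=\tp(a_i/Ab)\in S(Ab)$. By construction $q$ is nonisolated and minimal, so $(\dagger)$ applied to $p$ with parameter $b$ and this $q$ gives $a\ind_{Ab}q(\U)$, whence $a\ind_{Ab}a_i$ since $a_i\in q(\U)$. However, as $a_i\in\acl(Aa)$ and $U(a_i/Ab)=1$, Lascar additivity forces
\[
U(a/Ab) = U(aa_i/Ab) = U(a/Aba_i) + 1,
\]
so $U(a/Aba_i)<U(a/Ab)$, i.e.\ $a\nind_{Ab}a_i$, a contradiction. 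Hence $p$ is isolated.

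The main obstacle I anticipate is the semi-minimal analysis step itself: producing a sequence with each link stationary and of $U$-rank $1$ over a base that stays within $\acl(Aa)$. This is standard in totally transcendental theories of finite rank but requires careful tracking of stationarity across the $n$ iterations. Assumption~\ref{assumption} does not appear to be strictly required by the strategy above; its natural role, as emphasised in the introduction, is to replace the nonisolated minimal link $q$ by a nonisolated minimal type over $\emptyset$ via nonorthogonality in the non-locally-modular case, yielding a more uniform formulation suited to the concrete applications in $\DCF_0$ and $\ccm$.
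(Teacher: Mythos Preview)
Your argument hinges on Step~1, and you correctly flag it as the main obstacle---but it is not ``standard,'' and in fact it is the crux of the whole matter. The assertion that for $U(a/A)=n$ one can always find $a_1,\dots,a_n\in\acl(Aa)$ with each link of $U$-rank~$1$ is not a general theorem in totally transcendental theories. Standard decomposition results give a semi-regular or semi-minimal \emph{analysis} of $p$, but the minimal types appearing there may live over parameters outside $\acl(Aa)$; equivalently, there need not be any $e\in\acl(Aa)\setminus\acl(A)$ with $U(e/A)=1$. This is exactly why the paper's proof bifurcates. When $p$ is orthogonal to every non-locally-modular minimal type, the existence of such an $e$ is a nontrivial theorem of Hrushovski~\cite[Theorem~2]{udi-locmod}, invoked explicitly. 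In the non-locally-modular case no rank-one element of $\acl(Aa)$ is produced at all: instead Assumption~\ref{assumption} yields a nonisolated minimal $q\in S(A)$ nonorthogonal to $p$, one passes to a $q$-internal $d\in\dcl(Aa)$, and when $U(d/A)=U(p)$ the binding group of $\stp(d/A)$ relative to $q$ is shown (using $(\dagger)$ with $b=\emptyset$) to act transitively, forcing $\stp(d/A)$---and hence $p$---to be isolated outright, with no rank reduction.

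A telling diagnostic: your argument never uses Assumption~\ref{assumption}, so if it were correct it would resolve the open question at the end of~\S\ref{sectnec} (whether $(\dagger)$ characterises isolation of finite-rank types in \emph{every} totally transcendental theory). That the authors leave this open is strong evidence that the coordinatisation you need inside $\acl(Aa)$ is not available in general.
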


\begin{proof}
That isolated types satisfy~($\dagger$) in arbitrary totally transcendental theories is the content of Proposition~\ref{isotodagger}.

Before proving the converse, we first observe that if $\tp(a/A)$ satisfies~($\dagger$) then for any $e\in\acl(Aa)$ so do  $\tp(e/A)$ and $\tp(a/Ae)$.
For the former, let $b\in\acl(Ae)$, and $q\in S(Ab)$ nonisolated and minimal.
Then as $b\in\acl(Aa)$ also, ($\dagger$) implies that $\displaystyle a\ind_{Ab} q(\U)$, and hence $\displaystyle e\ind_{Ab}q(\U)$.
To see that $\tp(a/Ae)$ satisfies~($\dagger$), let $b\in \acl(Aea)$ and $q\in S(Aeb)$ nonisolated and minimal.
Then as $eb\in\acl(Aa)$ and $\tp(a/A)$ satisfies $(\dagger)$, we get $\displaystyle a\ind_{Aeb}q(\U)$, as desired.

We proceed to prove by induction on $U(p)$ that if $p$ satisfies~($\dagger$) then it is isolated.
If $U(p)=0$ then it is isolated.
If $U(p)=1$ then isolation is immediate from~$(\dagger)$ applied with $b=\emptyset$.
So assume that $U(p)>1$.

Suppose $p=\tp(a/A)$ is orthogonal to all non-locally-modular minimal types.
Since $p$ is of finite rank it is nonorthogonal to some minimal type, which by assumption must be locally modular.
Now, the minimal case of an old result of Hrushovski's on nonorthogonality to locally modular regular types implies that there exists $e\in\acl(Aa)$ with $U(e/A)=1$, see~\cite[Theorem~2]{udi-locmod}.
Since $U(p)>1$, $a\notin\acl(Ae)$.
Hence both $U(e/A)$ and $U(a/Ae)$ are less than $U(p)$, and as we have observed, both $\tp(e/A)$ and $\tp(a/Ae)$ satisfy~($\dagger$).
By induction, they are both isolated.
Hence $\tp(a/A)$ is isolated, as desired.

It remains to consider the case when $p$ is nonorthogonal to some non-locally-modular minimal type.
By Assumption~\ref{assumption}, $p$ is nonorthogonal to a nonisolated minimal type in  $S(\emptyset)$.
Taking the nonforking extension of that type to $A$ we get $q\in S(A)$ minimal, nonisolated, and nonorthogonal to $p$.
Applying~$(\dagger)$ with $b=\emptyset$, we have that $a\ind_A q(\U)$.
On the other hand,  nonorthogonality implies that there exists  $d\in \dcl(Aa)\setminus A$ with $\tp(d/A)$ internal to $q$, see~\cite[Corollary~7.4.6]{gst}.
We know that both $\tp(d/A)$ and $\tp(a/Ad)$ satisfy~($\dagger$).
If $U(d/A)< U(p)$ then, by induction, both $\tp(d/A)$ and $\tp(a/Ad)$ are isolated; and consequently $p$ would be isolated.
So we may assume $U(d/A)=U(p)$.
That is,  $a$ and $d$ are interalgebraic over $A$.
Let $p'=\stp(d/A)$ and $G$ be the $q$-binding group of $p'$.
This is a definable group over $\acl(A)$ acting definably over $A$ on $p'(\U)$.
Moreover, $a\ind_A q(\U)$ implies $d\ind_A q(\U)$, so that the binding group acts transitively on $p'(\mathcal U)$.
But this implies that $p'(\mathcal U)=G\cdot d$ is a definable set.
As $p'(\U)$ is also invariant under $\operatorname{Aut}_{\acl(A)}(\U)$, it follows that $p'(\U)$ is an $\acl(A)$-definable set.
That is, $p'$ is isolated.
But $a\in\acl(Ad)$ now implies that $\stp(a/A)$, and hence $\tp(a/A)=p$, is isolated.
\end{proof}

\bigskip
\section{The case of $\DCF_0$}
\label{sectdcf}

\noindent
In this section we specialise to the case when $T$ is the theory of differentially closed fields in characteristic zero.
We will make use of the Zilber trichotomy in this theory, a deep and fundamental result of Hrushovski and Sokolovic describing what the nontrivial minimal types look like.
While this work appears only in the unpublished manuscript~\cite{hrushovski-sokolovic}, some details on the locally modular case can be found in~\cite[III.4]{mof}, while the non-locally-modular case has an alternative proof that appears in~\cite{pillayziegler}.
We suggest~\cite{markermanin} for an exposition on the Manin kernels associated to abelian varieties.

\begin{fact}[Zilber Trichotomy in $\DCF_0$]
\label{dcftrichotomy}
Suppose $p\in S(A)$ is a minimal type.
\begin{itemize}
\item[(a)]
If $p$ is nontrivial locally modular then $p$ is nonorthogonal to the generic type of the Manin kernel of a simple abelian variety over $\acl(A)$ that does not descend to the constants.
\item[(b)]
If $p$ is not locally modular then it is nonorthogonal to the generic type of the field of constants over the empty set.
\end{itemize}
\end{fact}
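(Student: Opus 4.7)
The plan is to treat the two cases separately, invoking the deep structure theory of strongly minimal sets in $\DCF_0$ developed by Hrushovski, Sokolovic, Pillay and Ziegler, rather than attempting a fully self-contained argument.

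For part~(b), where $p$ is not locally modular, I would follow the Pillay--Ziegler approach via prolongations and jet spaces. The first step is to pass to a Zariski closure and observe that a non-locally-modular minimal type in $\DCF_0$ carries a (complete) Zariski geometry structure on $p(\U)$ modulo algebraic closure. Using jet/prolongation spaces of differential-algebraic varieties one argues that the combinatorial geometry is non-trivially amalgamating in a way that forces, after finite covers, an interpretable infinite field $K$ whose generic type is non-orthogonal to $p$. Wood's theorem (together with subsequent refinements) on definable fields in $\DCF_0$ then forces $K$ to be non-orthogonal to the field of constants $\C$, whence $p$ itself is non-orthogonal to the generic type of $\C$. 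Since $\C$ is defined over $\emptyset$ and strongly minimal, its generic type lives in $S(\emptyset)$, as required.

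For part~(a), where $p$ is non-trivial locally modular, I would extract a group via the group configuration theorem: minimal non-trivial locally modular types are non-orthogonal to the generic type of a connected type-definable commutative group $G$ over $\acl(A)$ of $U$-rank one. The core content is then the Hrushovski--Sokolovic classification of finite-dimensional commutative definable groups in $\DCF_0$: up to isogeny any such group decomposes into pieces from $\mathbb{G}_a$, $\mathbb{G}_m$, and Manin kernels of abelian varieties. The generic types of $\mathbb{G}_a$ and $\mathbb{G}_m$ are internal to $\C$ and hence non-locally-modular, so local modularity of $p$ forces $G$ to be (isogenous to) a Manin kernel $B^\sharp$ of an abelian variety $B$. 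Minimality of $G$ forces simplicity of $B$. Finally, if $B$ descended to the constants then $B^\sharp = B(\C)$ would be strongly minimal and non-orthogonal to $\C$, hence non-locally modular, contradicting the hypothesis; so $B$ does not descend.

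The main obstacle is clearly part~(b): manufacturing algebraic-field structure out of the abstract combinatorial geometry of an arbitrary non-locally-modular minimal type is the hard heart of the Hrushovski--Sokolovic/Pillay--Ziegler theorem, and requires the full machinery of Zariski geometries together with the differential-algebraic analysis of prolongations. Part~(a) is comparatively routine once the group configuration theorem and the classification of $\delta$-groups are granted, the remaining work being the orthogonality bookkeeping outlined above.
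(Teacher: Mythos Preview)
Your outline for both parts is a reasonable sketch of the standard literature proofs, and for part~(b) it matches what the paper does: simply cite Pillay--Ziegler (or Hrushovski--Sokolovic) without elaboration.

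For part~(a), however, you have missed the one point the paper actually takes the trouble to justify. The statement asserts that the simple abelian variety can be taken \emph{over $\acl(A)$}, and the paper explicitly flags this as ``maybe not widely known.'' Your argument produces, via group configuration, a rank-one group $G$ over $\acl(A)$, and then invokes the Hrushovski--Sokolovic classification to say $G$ is isogenous to some $B^\sharp$. But you never argue that $B$ itself is defined over $\acl(A)$; the classification as usually stated (and as cited in the paper from \cite[Lemma~2.11]{hrushovski-sokolovic}) only gives $B$ over some extended parameter set $Aa_1$. The paper closes this gap with a descent argument: take an $A$-conjugate $a_2$ of $a_1$ independent from $a_1$ over $A$, obtain a second abelian variety $G_2$ over $Aa_2$ whose Manin kernel is also nonorthogonal to $p$, deduce $G_1$ and $G_2$ are isogenous, and then invoke the Claim in the proof of \cite[Proposition~2.8]{hrushovski-sokolovic} (which uses that $\acl(A)$ is a model of $\operatorname{ACF}_0$) to descend the isogeny class to $\acl(A)$.

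So your proposal is fine as a summary of the trichotomy itself, but it omits precisely the refinement that the paper is isolating and proving. If you want your argument to establish the statement as formulated, you need to insert a descent step of this kind after obtaining $B$.
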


A few words justifying our formulation of part~(a) may be in order.
It is maybe not widely known that the simple abelian variety can be taken to be over $\acl(A)$.
However, as Martin Hils has pointed out to us, this can be deduced from the results in \cite{hrushovski-sokolovic}.
We give a few details.
By \cite[Lemma 2.11]{hrushovski-sokolovic}, if $p$ is nontrivial locally modular then $p$ is nonorthogonal to the Manin kernel of some simple abelian variety $G_1$ over $Aa_1$ for some $a_1$.
Now take a conjugate $a_2$ of $a_1$ that is independent of $a_1$ over~$A$.
We then get a Manin kernel of some simple abelian variety $G_2$ over $Aa_2$ such that $p$ is also nonorthogonal to its generic type.
Hence, the two Manin kernels are nonorthogonal.
By \cite[Theorem 2.12]{hrushovski-sokolovic}, $G_1$ and $G_2$ are isogenous. By the Claim in the proof of \cite[Proposition 2.8]{hrushovski-sokolovic}, it follows that $G_1$ is isogenous to a (necessarily simple) abelian variety $G$ over $\acl(A)$.
The Manin kernel of $G$ is thus nonorthogonal to that of $G_1$ (by Theorem 2.12 of \cite{hrushovski-sokolovic} again).
So $p$ is nonorthogonal to the Manin kernel of $G$, which is over $\acl(A)$, as desired.

As we have pointed out earlier, a consequence of Fact~\ref{dcftrichotomy} is that Assumption~\ref{assumption} is satisfied in $\DCF_0$, and hence Theorem~\ref{the1} applies.

\medskip
\subsection{Two Examples}
We begin with a pair of examples that show Theorem~\ref{the1} to be best possible in the sense that it is essential in~($\dagger$) to consider arbitrary $b\in\acl(Aa)$.
That is, as Example~\ref{maninkernel} shows, one cannot deduce isolation of $p=\tp(a/A)$  by checking that $\displaystyle a\ind_A q(\U)$ for all nonisolated minimal types $q\in S(A)$.
In fact, as Example~\ref{j} shows, it does not even suffice to consider $q\in S(Ab)$ for all $b\in\dcl(Aa)$, one must pass to $\acl(Aa)$.

\begin{example}[Parametrised family of Manin kernels]
\label{maninkernel}
From~\cite[$\S$4]{pdme} one sees that there exist nonisolated finite rank types $p=\tp(a)$ in $\DCF_0$ satisfying:
\begin{itemize}
\item
$p$ is weakly orthogonal to the field of constants $\mathcal C$; and,
\item
there exists $b\in\dcl(a)$ such that $\tp(b)$ is minimal $\mathcal C$-internal and $\tp(a/b)$ is minimal nontrivial locally modular.
\end{itemize}
We claim that $\displaystyle a\ind q(\U)$ for any $q\in S(\emptyset)$ nonisolated and minimal.
\end{example}

\begin{proof}
Indeed, if $q$ is trivial then it is orthogonal to both $\tp(b)$ and $\tp(a/b)$, and hence to $p$, so that $\displaystyle a\ind q(\U)$ follows.
If $q$ is non-locally-modular then it is the generic type of some strongly minimal $0$-definable set $X$ (see~\cite[$\S2.3$]{gst}).
By nonisolation of~$q$ we must have that $X\cap\acl(\emptyset)$ is infinite.
But then $X(\C)$ is infinite, and hence cofinite.
So $q(\U)\subseteq \C^n$ and $\displaystyle a\ind q(\U)$ follows by weak orthogonality.
As there are no  minimal nontrivial locally modular types over the empty set -- this follows from Fact~\ref{dcftrichotomy}(a) -- these are all the possibilities for $q$.
\end{proof}

The above example works not only over the empty set, but over any subset of the field of constants.
Indeed, for any $A\subseteq \mathcal C$, letting $a$ be as in the above example, one can verify that $\tp(a/A)$ remains a nonisolated type of finite rank while $\displaystyle a\ind_A q(\U)$ for all $q\in S(A)$ nonisolated and minimal.

\begin{example}[The symmetric power of the $j$-function equation]
\label{j}
Freitag and Scanlon~\cite{freitag-scanlon} have studied the order three algebraic differential equation satisfied by the analytic $j$-function.
It defines, over the empty set, a strongly minimal trivial set $X$ in $\DCF_0$.
But unlike all previous such examples, $X$ is not $\omega$-categorical.
Indeed, $X\cap\acl(c)$ is infinite for any $c\in X$ generic.
This is due to Hecke correspondences, see the final paragraph of the proof of~\cite[Theorem~4.7]{freitag-scanlon}.
Now, let $c_1,c_2\in X$ be a pair of independent generics.
Let $a$ be a code for $\{c_1,c_2\}$, and set $p:=\tp(a)$.
We claim that $p$ is nonisolated but $\displaystyle a\ind_{b}q(\U)$ for any $b\in\dcl(a)$ and any nonisolated minimal $q\in S(b)$.
\end{example}

\begin{proof}
Note, first of all, that $\tp(c_1/c_2)$ is not isolated since it is the generic type of $X$ over $c_2$, and $X\cap\acl(c_2)$ is infinite by construction.
In particular, as $a$ is interalgebraic with $(c_1,c_2)$, we have that $p$ is nonisolated.

Now, because $p$ is the code of a set of independent generic elements in a trivial strongly minimal set, we have by~\cite[Example~2.2]{moosapillay14} that $p$ admits no {\em proper fibrations}.
That is, if $b\in\dcl(a)$ then either $a\in\acl(b)$ or $b\in\acl(\emptyset)$.
So it suffices to show that $\displaystyle a\ind q(\U)$ for any nonisolated minimal type $q$ over $\acl(\emptyset)$.
Suppose this fails for some $q$.
Then $\displaystyle (c_1,c_2)\nind q(\U)$.
Since $q$ is minimal, this implies that $q$ is nonorthogonal to the generic type of $X$.
In particular, $\operatorname{RM}(q)=1$.
(Indeed, after taking a nonforking extension, a realisation of $q$ is interalgebraic with a generic element of the strongly minimal set $X$.)
It follows that $q$ is the generic type of some strongly minimal definable set $Y$ over $\acl(\emptyset)$.
Moreover, $Y$ must be trivial since $X$ is.
But then $Y(\mathcal C)$ is finite, so that $Y\cap\acl(\emptyset)$ is finite as $\acl(\emptyset)\subseteq\C$, contradicting the fact that $q$ is nonisolated.
\end{proof}

\medskip
\subsection{An improvement on the main theorem}
The characterisation of isolation given by Theorem~\ref{the1} can be significantly improved in the case of $\DCF_0$.
We begin by pointing out that the trichotomy expressed by Fact~\ref{dcftrichotomy} takes on an even stronger form when we restrict our attention to the nontrivial minimal types that are nonisolated:

\begin{lemma}\label{description}
A nontrivial minimal type $p\in S(A)$ is nonisolated if and only if $p(\U)\subseteq \acl(Aq(\U))$ where $q\in S\big(\acl(A)\big)$ is the generic type of either the constant field or of the Manin kernel of some simple abelian variety over $\acl(A)$ that does not descend to the constants.
\end{lemma}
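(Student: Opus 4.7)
The plan is to handle the two directions of the equivalence separately.

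For the easier $(\Leftarrow)$ direction, I would assume $p(\U) \subseteq \acl(Aq(\U))$. The first observation is that $q$ is itself nonisolated: in the constants case $\mathcal C \cap \acl(A)$ contains $\overline{\mathbb Q}$, and in the Manin-kernel case $G^{\#} \cap \acl(A)$ contains the torsion of $G$, both infinite sets. Assuming for contradiction that $p$ is isolated, by point~(2) of the introduction I may pass to the unique extension of $p$ to $\acl(A)$ and thus assume $A = \acl(A)$. Let $M$ be a prime model over $A$; then $M$ contains a realization $a$ of $p$. The lemma following Proposition~\ref{isotodagger}, applied to the nonisolated minimal type $q$, gives $M \ind_A q(\U)$, so $a \ind_A q(\U)$. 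Since $a \in \acl(Aq(\U))$, finite character produces a tuple $\bar b$ from $q(\U)$ with $a \in \acl(A\bar b)$; combined with $a \ind_A \bar b$, this forces $a \in \acl(A)$, contradicting the minimality of $p$.

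For the $(\Rightarrow)$ direction, I would assume $p$ is nontrivial, minimal, and nonisolated, and again reduce to $A = \acl(A)$ via stationarity. Fact~\ref{dcftrichotomy} then supplies a type $q \in S(A)$ of the required form: the generic type of the Manin kernel of a simple abelian variety over $A$ not descending to the constants in the nontrivial locally modular case, and the generic type of $\mathcal C$ in the non-locally-modular case, with $p$ nonorthogonal to $q$. The remaining task is to upgrade this nonorthogonality to the containment $p(\U) \subseteq \acl(Aq(\U))$, and this is precisely where the nonisolation hypothesis on $p$ intervenes. Let $X$ be the strongly minimal set underlying $p$, cut out by a formula of minimal Morley rank and degree in $p$; nonisolation of $p$ means that $X \cap A$ is infinite, and in particular contains a base point $x_0$.

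The main obstacle is to convert this data into an $A$-definable finite-to-finite correspondence between $X$ and the underlying set of $q$, that is, $\mathcal C$ or $G^{\#}$. The structural results of Hrushovski and Sokolovic that support Fact~\ref{dcftrichotomy} produce such a correspondence after a finitely generated base extension; the role of the $A$-algebraic point $x_0$ is to rigidify the correspondence by supplying a distinguished base for it, thereby bringing its parameters of definition down to $A$. Granted this, any $a \models p$ has a corresponding $b$ in the underlying set of $q$ with $a \in \acl(A,b)$, and since $a$ is generic over $A$ in $X$, the image $b$ is generic in the underlying set of $q$, that is, $b \models q$. This yields the desired inclusion, and the hardest point of the argument is precisely the rigidification step, which I would address by appealing directly to the relevant results in the Hrushovski-Sokolovic classification.
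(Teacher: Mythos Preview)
Your $(\Leftarrow)$ direction is correct and essentially matches the paper's argument (the paper phrases it as an application of Proposition~\ref{isotodagger} with $b=\emptyset$, which of course rests on the same lemma you invoke).

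For the $(\Rightarrow)$ direction, however, your approach diverges from the paper's and carries a real gap. The paper argues by contrapositive and is considerably shorter: having obtained from Fact~\ref{dcftrichotomy} a type $q$ over $\acl(A)$ nonorthogonal to $p$, one observes that nonorthogonality of two \emph{minimal} types already forces $p$ to be almost $q$-internal. Now suppose $p(\U)\not\subseteq\acl(Aq(\U))$; picking $a\models p$ outside $\acl(Aq(\U))$, minimality of $p$ gives $a\ind_A q(\U)$ immediately (any forking would place $a$ in the algebraic closure of $A$ together with finitely many realisations of $q$). One is then exactly in the situation of the final paragraph of the proof of Theorem~\ref{the1}: almost internality together with $a\ind_A q(\U)$ makes the binding group act transitively, so $p'(\U)$ is a definable set and $p$ is isolated. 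No explicit correspondence, and no use of a base point in $X$, is needed.

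Your direct route has two problems. First, the passage from nonisolation of $p$ to ``$X\cap A$ is infinite'' is not justified: a minimal type in $\DCF_0$ need not have Morley rank one, so the formula $X$ of least Morley rank and degree in $p$ need not be strongly minimal, and its non-generic locus need not consist of algebraic points. Second, and more seriously, the ``rigidification'' step --- using an $A$-algebraic point $x_0\in X$ to descend the parameters of a finite-to-finite correspondence down to $A$ --- is not a general principle, and you do not indicate which statement in Hrushovski--Sokolovi\'c would actually deliver it. The honest content of what you are reaching for is precisely almost internality over $A$, and once that is recognised, the binding group argument finishes the job without any appeal to the nonisolation of $p$ on this side of the equivalence.
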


\begin{proof}
Fact~\ref{dcftrichotomy} tells us that nontriviality implies nonorthogonal to the generic type, say $q\in S\big(\acl(A)\big)$, of either the constant field or of the Manin kernel of some simple abelian variety over $\acl(A)$ that does not descend to the constants.
Suppose $p(\U)\not\subseteq \acl(Aq(\U))$.
By minimality of $p$, this means that there is $a\models p$ such that $\displaystyle a\ind_A q(\U)$.
On the other hand, $p$ is almost internal to $q$.
These conditions, namely $\displaystyle a\ind_A q(\U)$ and the almost internality of $\tp(a/A)$ to $q$, imply that $p$ is isolated (see the argument in the last paragraph of the proof of Theorem~\ref{the1}).

For the converse, note that the generic type of the constant field $\C$ is minimal and nonisolated since $\C$ is a strongly minimal set with infinitely many points in $\acl(A)$, that infinite set being the characteristic zero field $\C\cap\acl(A)$.
The generic type of the Manin kernel of a simple abelian variety over $\acl(A)$ is minimal and nonisolated for the same reason -- the Manin kernel is strongly minimal and it has infinitely many $\acl(A)$-points, namely the torsion points of the abelian variety.
This gives the right-to-left direction, using for example~($\dagger$) applied with $b=\emptyset$, which holds of any isolated type $p$ by Proposition~\ref{isotodagger}.
\end{proof}

We obtain the following improvement of Theorem~\ref{the1} in the case of $\DCF_0$.

\begin{theorem}
\label{the1dcf}
Suppose $p=\tp(a/A)$ is of finite rank.
Then $p$ is isolated if and only if the following hold:
\begin{itemize}
\item[(i)]
$\displaystyle a\ind_A\C$; and
\item[(ii)]
for every $b\in\acl(Aa)$ and $G$ a simple abelian variety over $\acl(Ab)$ that does not descend to the constants, letting $G^\sharp$ denote the Manin kernel of $G$, $\displaystyle a\ind_{Ab}G^\sharp$; and
\item[(iii)]
for every $b\in \acl(Aa)$ and $q\in S(Ab)$ nonisolated and trivial minimal, $\displaystyle a\ind_{Ab}q(\U)$.
\end{itemize}
\end{theorem}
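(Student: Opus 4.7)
My strategy is to deduce Theorem~\ref{the1dcf} from Theorem~\ref{the1} by showing that conditions (i)--(iii) are jointly equivalent to~($\dagger$), using Lemma~\ref{description} to decompose the nontrivial case via the Zilber trichotomy.

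For the forward direction, assume $p$ is isolated; I will extract (i)--(iii) as special cases of~($\dagger$) via Proposition~\ref{isotodagger}. Condition~(i) is the instance of~($\dagger$) with $b=\emptyset$ and $q$ the generic type of $\C$ over $A$; this $q$ is minimal and nonisolated because $\C\cap\acl(A)$ is infinite, and $q(\U)$ is cofinite in $\C$, so~($\dagger$) gives $a\ind_A\C$. Condition~(ii) comes from~($\dagger$) applied to the generic type of $G^\sharp$ over $\acl(Ab)$, which is minimal and nonisolated (the torsion of $G$ lies in $G^\sharp\cap\acl(Ab)$) with $q(\U)$ cofinite in $G^\sharp$. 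Condition~(iii) is simply the restriction of~($\dagger$) to trivial minimal $q$.

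For the backward direction, Theorem~\ref{the1} says it suffices to derive~($\dagger$) from (i)--(iii). Fix $b\in\acl(Aa)$ and a nonisolated minimal $q\in S(Ab)$. If $q$ is trivial, condition~(iii) directly yields $a\ind_{Ab}q(\U)$. If $q$ is nontrivial, I will invoke Lemma~\ref{description} applied over $Ab$ to obtain $q(\U)\subseteq\acl\bigl(Ab\cdot q'(\U)\bigr)$ for some $q'\in S(\acl(Ab))$ that is either the generic type of $\C$ or of the Manin kernel $G^\sharp$ of a simple abelian variety $G$ over $\acl(Ab)$ not descending to the constants. It will then suffice to show $a\ind_{Ab}q'(\U)$. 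The Manin kernel case is exactly condition~(ii), modulo the cofiniteness of $q'(\U)$ in $G^\sharp$. The constants case reduces to $a\ind_{Ab}\C$, which follows from~(i): since $b\in\acl(Aa)$, the independence $a\ind_A\C$ extends to $ab\ind_A\C$, whence $a\ind_{Ab}\C$ by symmetry and transitivity of forking.

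The main obstacle, such as it is, is purely bookkeeping around parameters---verifying that Lemma~\ref{description} applied over $Ab$ yields exactly the three cases addressed by (i)--(iii), and that independence with respect to the ambient strongly minimal set (the constants or a Manin kernel) coincides with independence with respect to the generic type's realisations, since the two differ only at the algebraic closure of the base parameters. No genuinely new ingredient is required beyond Theorem~\ref{the1} and Lemma~\ref{description}.
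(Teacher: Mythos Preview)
Your proposal is correct and follows essentially the same approach as the paper: both directions proceed via Theorem~\ref{the1} and Proposition~\ref{isotodagger}, with Lemma~\ref{description} handling the nontrivial case by reducing to the constants or a Manin kernel, and the constants case of the backward direction handled by pushing $a\ind_A\C$ up to $a\ind_{Ab}\C$ using $b\in\acl(Aa)$. The only minor bookkeeping point---which the paper glosses over equally---is that in condition~(ii) the relevant generic type lives over $\acl(Ab)$ rather than $Ab$, but this is harmless since one may absorb the extra parameters into $b$ while staying inside $\acl(Aa)$.
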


\begin{proof}
Suppose $p$ is isolated.
By Proposition~\ref{isotodagger} we know that~($\dagger$) holds.
So, for every $b\in \acl(Aa)$ and $q\in S(Ab)$ nonisolated and minimal, $\displaystyle a\ind_{Ab}q(\U)$.
Taking $q$ to be nonisolated minimal trivial yields~(iii).
Taking $q$ to be the generic type of $G^\sharp$ over $\acl(Ab)$ we have that $q(\U)=G^\sharp\setminus\acl(Ab)$ and hence $\displaystyle a\ind_{Ab}G^\sharp$.
Similarly, taking $b=\emptyset$ and $q$ to be the generic type of $\C$ over $A$, we have that $q(\U)=\C\setminus\acl(A)$ and hence $\displaystyle a\ind_A\C$.

For the converse, by Theorem~\ref{the1}, it suffices to show that~(i) through~(iii) imply~($\dagger$).
Let $b\in \acl(Aa)$ and $q\in S(Ab)$ nonisolated and minimal.
We want to show $\displaystyle a\ind_{Ab}q(\U)$.
If $q$ is trivial then this follows by~(iii).
If $q$ is nontrivial then by Lemma~\ref{description} we have that
$q(\U)\subseteq \acl(Abq'(\U))$ where $q'\in S(Ab)$ is the generic type of either the constant field or of the Manin kernel of a simple abelian variety over $\acl(Ab)$ that does not descend to the constants.
So it suffices to show that $\displaystyle a\ind_{Ab}q'(\U)$.
If $q'$ is the generic type of a Manin kernel then this is~(ii).
So suppose $q'$ is the generic type of $\C$ over $Ab$.
For any finite tuple $c$ from $q'(\U)\subseteq\C^n$ we have $\displaystyle a\ind_Ac$ by~(i), and so $\displaystyle a\ind_{Ab}c$ as $b\in\acl(Aa)$.
We have shown that $\displaystyle a\ind_{Ab}q'(\U)$, as desired.
\end{proof}

In practice, conditions~(i) and~(ii) of Theorem~\ref{the1dcf} are relatively easy to check as they refer to concrete differential varieties.
It is the trivial case, namely condition~(iii), that remains in general intractable.
Unfortunately, we cannot eliminate this condition, even when $A=\emptyset$.
For example\footnote{We have to resort here to this relatively recently discovered example because all previously known trivial strongly minimal sets in $\DCF_0$ were $\omega$-categorical, and it is easy to see that the generic type $q$ of an $\omega$-categorical strongly minimal set over a differential field that is finitely generated over its constant subfield is always isolated, and hence cannot pose an obstacle to~(iii).}, let $X$ be the trivial strongly minimal but not $\omega$-categorical $0$-definable set coming from~\cite{freitag-scanlon} and discussed in Example~\ref{j} above.
Let $(c_1,c_2) $ be an independent pair of generic elements of $X$, and this time let $a:=(c_1,c_2)$ and $p:=\tp(a)$.
Then $p$ satisfies conditions~(i) and~(ii) of Theorem~\ref{the1dcf}, but fails condition~(iii) with $b=c_2$.
Indeed, $q:=\tp(c_1/c_2)$ is minimal and trivial as it is the generic type of $X$, it is nonisolated since $X$ has infinitely many points algebraic over $c_2$, and clearly $\displaystyle a\nind_{Ac_2}c_1$.

Nonetheless, in some cases we can ignore condition~(iii); for example when $p$ extends a finite rank definable group.
In that case, any trivial minimal type is orthogonal to $p$ and all its extensions, so~(iii) is automatic.
In other cases we can eliminate~(ii) as well; if $p$ is analysable in the constants then it and all its extensions will be orthogonal to all Manin kernels as well as all trivial minimal types.

\medskip
\subsection{Connection to the Dixmier-Moeglin equivalence}
\label{sectdme}

\noindent
In~\cite[$\S$9]{pdme} it was shown that finite rank types $p=\tp(a/A)$ in $\DCF_0$, with $A\subseteq\C$, that satisfy condition~(i) of Theorem~\ref{the1dcf} but are not isolated, can be used to produce finite Gelfand-Kirillov dimension counterexamples to the classical Dixmier-Moeglin equivalence for noetherian algebras.
The types with these properties that arose there were the ones coming from the parameterised Manin kernels of Example~\ref{maninkernel}.
As we have just seen, the generic type of $X\times X$, where $X$ is defined by the differential equation satisfied by the $j$-function, gives us another such counterexample, different from the ones appearing in~\cite{pdme}.

The {\em differential} Dixmier-Moeglin equivalence for $D$-varieties was made explicit in~\cite{BLSM}, where positive results, as opposed to counterexamples, were the focus.
The following is Corollary~2.4 of~\cite{BLSM}.
It was used there to show that $D$-groups over the constants satisfy the differential Dixmier-Moeglin equivalence, and eventually to verify the classical Dixmier-Moeglin equivalence for Hopf Ore extensions.
We give here an alternative proof; indeed it is an immediate consequence of Theorem~\ref{the1dcf} above.
We use freely the terminology of $D$-varieties, and the specific notions developed in~\cite{BLSM}, without further explanation.

\begin{corollary}
\label{deduce-diffDME}
Suppose $(V,s)$ is a $D$-variety over an algebraically closed $\delta$-subfield $A$ of the field of constants, with the property that every irreducible $D$-subvariety of $V$ over~$A$ is compound isotrivial.
Then $(V,s)$ satisfies the differential Dixmier-Moeglin equivalence.
\end{corollary}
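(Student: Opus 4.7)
The plan is to reduce the corollary to a direct application of Theorem~\ref{the1dcf}. In the finite rank setting, the differential Dixmier-Moeglin equivalence for $(V,s)$ amounts to the statement that, for every irreducible $D$-subvariety $W$ of $V$ over $A$, the generic type $p_W = \tp(a/A)$ of $W$ is isolated if and only if it is weakly orthogonal to the constants~$\C$. Since $A \subseteq \C$, weak orthogonality to~$\C$ is precisely condition~(i) of Theorem~\ref{the1dcf}. So the task is to verify that, under our hypothesis, conditions~(ii) and~(iii) of that theorem are automatic.

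Fix such a $W$ and let $a$ be a generic point over $A$. The assumption that every irreducible $D$-subvariety of $V$ over $A$ is compound isotrivial translates, via the correspondence developed in~\cite{BLSM}, into the statement that $p_W$ is analysable in the constants. Moreover, because the hypothesis applies to \emph{every} irreducible $D$-subvariety of $V$ over $A$, and because for $b \in \acl(Aa)$ the locus of $a$ over $Ab$ is again such a $D$-subvariety (after possibly enlarging the ambient $D$-variety in an obvious way), analysability in $\C$ persists for $\tp(a/Ab)$ as well. But a type analysable in $\C$ is orthogonal to every trivial minimal type and to the generic type of every Manin kernel of a simple abelian variety not descending to the constants --- this is the observation made in the paragraph preceding this corollary. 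Hence for every $b \in \acl(Aa)$, every simple abelian variety $G$ over $\acl(Ab)$ not descending to the constants, and every nonisolated trivial minimal $q \in S(Ab)$, we have $a \ind_{Ab} G^\sharp$ and $a \ind_{Ab} q(\U)$. These are conditions~(ii) and~(iii) of Theorem~\ref{the1dcf}.

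Applying Theorem~\ref{the1dcf}, isolation of $p_W$ reduces to condition~(i), namely $a \ind_A \C$, which is weak orthogonality to the constants. Since this equivalence holds for every irreducible $D$-subvariety $W$ of $V$ over $A$, we obtain the differential Dixmier-Moeglin equivalence for $(V,s)$. The main thing to verify carefully is the translation between the geometric notion of compound isotriviality from~\cite{BLSM} and the model-theoretic notion of analysability in~$\C$, together with the fact that this property is inherited by $\tp(a/Ab)$ for $b \in \acl(Aa)$; once this is in hand, the corollary is essentially immediate from Theorem~\ref{the1dcf}.
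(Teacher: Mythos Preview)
Your proposal is correct and follows the same route as the paper's proof: translate compound isotriviality into analysability in~$\C$, observe that this makes conditions~(ii) and~(iii) of Theorem~\ref{the1dcf} automatic, and conclude that isolation is equivalent to condition~(i), i.e., weak orthogonality to~$\C$.

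One small point: your detour arguing that $\tp(a/Ab)$ is analysable in~$\C$ by invoking the geometric hypothesis again is both unnecessary and not quite justified as written---the locus of $a$ over $Ab$ is a $D$-subvariety over $Ab$, not over $A$, so the hypothesis on $D$-subvarieties of $V$ over $A$ does not apply to it directly. The paper avoids this by using the purely model-theoretic fact that analysability in~$\C$ passes to arbitrary extensions: once $\tp(a/A)$ is analysable in~$\C$, so is every $\tp(a/Ab)$, and orthogonality to Manin kernels and to trivial minimal types then gives (ii) and (iii) immediately. With that replacement your argument matches the paper's exactly.
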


\begin{proof}
Working over the constants the Dixmier-Moeglin equivalence reduces to showing that every type in $S(A)$ extending $(V,s)^\sharp$, that is weakly orthogonal to the constants, is isolated.
The compound isotriviality assumption means that every such type $p=\tp(a/A)$ is analysable in the constants.
Conditions~(ii) and~(iii) of Theorem~\ref{the1dcf} are therefore automatically satisfied.
Hence, by Theorem~\ref{the1dcf}, weak orthogonality to $\C$, which is condition~(i), implies isolation.
\end{proof}

\bigskip
\section{The case of $\ccm$}
\label{sectccm}

\noindent
As we have mentioned, Theorem~\ref{the1} also applies to the theory of compact complex manifolds.
Much of what was done in the previous section for $\DCF_0$ goes through for $\ccm$ if one replaces $\C$ by the projective line and Manin kernels by {\em nonstandard simple complex tori} (see~\cite{ams}) of dimension greater than $1$.
This is especially the case if you restrict attention to compact K\"ahler manifolds where one has essential saturation (see~\cite{es}).
There is even an analogue of the algebraic differential equation satisfied by the analytic $j$-function: in~\cite{categorical} it is observed that there exists a $0$-definable strongly minimal trivial set in $\ccm$ that is not $\omega$-categorical.
However, there is one key obstacle to obtaining a full analogue of Theorem~\ref{the1dcf}.
We do not know if the analogue of the Claim in the proof of \cite[Proposition 2.8]{hrushovski-sokolovic} holds: 

\begin{question}
Suppose $G_1$ and $G_2$ are nonstandard simple complex tori over $b_1$ and $b_2$ respectivey, and $b_1$ and $b_2$ are independent over $c\in\dcl(b_1)\cap\dcl(b_2)$.
Is it the case that if $G_1$ and $G_2$ are isogenous then there is $c'\in\acl(c)$ and a nonstandard simple complex torus over $c'$ to which both $G_1$ and $G_2$ are isogenous?
\end{question}

In the algebraic case one uses the fact that an algebraically closed set is a model, which is no longer true in $\ccm$.
Without a positive answer to this question one obtains only a weak analogue of Theorem~\ref{the1dcf} where in condition~(ii) Manin kernels are replaced by nonisolated minimal types that are nonorthogonal to the generic type of a nonstandard simple complex torus defined possibly over extra parameters.

\bigskip


\end{document}